\newcommand{\Z}{\mathbb{Z}}
\newcommand{\Q}{\mathbb{Q}}
\newcommand{\R}{\mathbb{R}}
\newcommand{\C}{\mathbb{C}}
\newcommand{\N}{\mathbb{N}}
\newcommand{\ind}{\mathbf{1}}
\renewcommand{\Re}{\mathrm{Re}}
\renewcommand{\Im}{\mathrm{Im}}
\newcommand*\diff{\mathop{}\!\mathrm{d}}
\newcommand{\fdg}{\colon}
\newcommand{\eps}{\varepsilon}
\newcommand{\bb}{,\ldots ,}
\newtheorem*{theorem*}{Theorem}
\newtheorem{thm}{Theorem}
\newtheorem{lem}{Lemma}
\newtheorem{cor}{Corollary}
\theoremstyle{definition}
	\newtheorem{problem}{Problem}
	\newtheorem{rem}{Remark}
	\newcounter{countD}
	\newtheorem{problemStar}[countD]{Problem}
\begin{document}
\title[Pillai's problem with transcendental numbers]{On a variant of Pillai's problem with transcendental numbers}
\subjclass[2020]{11D75, 11D61, 11D45} 
\keywords{Diophantine inequalities, Pillai’s problem}
\thanks{
The first author was supported by the Austrian Science Fund (FWF) under the project I4406 and the project (SFB) F5510. The second and the last author were supported by the Austrian Science Fund (FWF) under the project I4406. The third author was also supported by the Austrian Science Fund (FWF) under the project W1230.
}

\author[R. Tichy]{Robert Tichy}
\address{R. Tichy,
Institute of Analysis and Number Theory, Graz University of Technology,
Kopernikusgasse 24/II, 
A-8010 Graz, Austria
}
\email{tichy\char'100tugraz.at}

\author[I. Vukusic]{Ingrid Vukusic}
\address{I. Vukusic,
University of Salzburg,
Hellbrunnerstrasse 34/I,
A-5020 Salzburg, Austria}
\email{ingrid.vukusic\char'100plus.ac.at}

\author[D. Yang]{Daodao Yang}
\address{D. Yang,
Institute of Analysis and Number Theory, Graz University of Technology,
Kopernikusgasse 24/II, 
A-8010 Graz, Austria
}
\email{yang\char'100tugraz.at}

\author[V. Ziegler]{Volker Ziegler}
\address{V. Ziegler,
University of Salzburg,
Hellbrunnerstrasse 34/I,
A-5020 Salzburg, Austria}
\email{volker.ziegler\char'100plus.ac.at}

\begin{abstract}
In this paper, we study the 
asymptotic behaviour of the number of solutions $(m, n)\in \N^2$ to the  inequality $ | \alpha^n - \beta^m | \leq x $  when $x$ tends to  infinity.  Here $\alpha, \beta$ are  given multiplicatively independent complex  numbers  with $|\alpha| > 1$ and $|\beta|>1$.
\end{abstract}

\maketitle

\section{Introduction}

Pillai considered gaps between perfect powers since the 1930s. He proved  \cite{PillaiIneq} that for fixed positive integers $a, b$ with $a > 1$ and $b >1$ the following holds:
\[ \# \{ (n,m)\in \N^2 \colon 0 < a^n-b^m \leq x \} \sim \frac{(\log x)^2}{2 \log a \cdot \log b },	\quad \text{as } x \to \infty.
\]

Furthermore, Pillai \cite{Pillai, Pillaicorrection} proved that the number of positive integers which are smaller than $x$ and can be expressed in the form $a^n - b^m$ is asymptotically equal to $ (\log x)^2/ (2 \log a \cdot \log b)$, when $x$ goes to  infinity.

Pilla's results on perfect powers have been generalized to linear recurrence sequences with certain dominant root conditions in \cite{Yang2020}, and for more general cases in \cite{TichyVukusicYangZiegler2021}. With exactly the same methods it can be proved that for  multiplicatively independent complex algebraic numbers $\alpha, \beta$ with $|\alpha| > 1$ and $|\beta|>1$ we have the following:
\begin{equation}\label{eq:main}
	\#\{(n, m)\in \N^2\colon ~  | \alpha^n - \beta^m | \leq x \}  \sim \frac{(\log x)^2}{\log |\alpha|\cdot \log  |\beta| },
	\quad \text{as } x \to \infty.
\end{equation}

It was conjectured in \cite{Yang2020} that \eqref{eq:main} might hold even if we do not assume $\alpha $ and $\beta $ to be algebraic. However, formula \eqref{eq:main} does not hold for general transcendental numbers $\alpha$ and $\beta$. In Section \ref{sec:counterex} we construct a counterexample.

Yet, the following problem remains unsolved (cf.\ \cite{TichyVukusicYangZiegler2021}).

\begin{problem}\label{probl:main}
For which multiplicatively independent complex numbers $\alpha,\beta \in \C$ with $|\alpha|>1$ and $|\beta| > 1$ does formula \eqref{eq:main} hold?
\end{problem}

It turns out that in fact formula \eqref{eq:main} holds for almost all $\alpha$ and $\beta$. This is proved in Section~\ref{sec:metric}, where we give a sufficient condition for \eqref{eq:main} to hold. The condition involves the irrationality exponent of $\log |\alpha| / \log |\beta|$. However, it is in general difficult to decide, whether the condition is fulfilled for specific $\alpha$ and $\beta$. Moreover, the result contains some ineffective constants.
In Section \ref{sec:specialCases} we prove effective results for two special cases, where $\alpha$ and/or $\beta$ are transcendental of the form $e^\gamma$ with algebraic $\gamma$. In Section \ref{sec:aProblem} we consider a variant of a problem earlier posed by the authors \cite[Problem 2]{TichyVukusicYangZiegler2021}.

\section{A counterexample}\label{sec:counterex}

In this section we find two real numbers $\alpha$, $\beta$ with $|\alpha|>1$ and $|\beta|>1$, which are multiplicatively independent but for which the inequality
\begin{equation}\label{eq:counterI}
|\alpha^n - \beta^m| \leq 1
\end{equation}	
has infinitely many solutions $(n,m)\in \N^2$. This shows that formula \eqref{eq:main} does not hold for all real numbers $\alpha$ and $\beta$.

The main idea is to construct an extremely well approximable Liouville number. We define
\[
	c= \sum_{i=0}^\infty 10^{-a(i)},
\]
where $a(i)$ is given by
\begin{align*}
	&a(0)=1 \quad \text{and} \\
	&a(i+1)=10^{a(i)}
	\quad \text{for } i \geq 0.
\end{align*}

Then by construction there are infinitely many approximations $\frac{p_k}{q_k}=\sum_{i=0}^k 10^{-a(i)}$ with
\[
	\left| c - \frac{p_k}{q_k} \right|
	= \sum_{i=k+1}^\infty 10^{-a(i)}
	< 2 \cdot 10^{-a(k+1)}
	= 2 \cdot 10^{-10^{a(k)}}.
\]
Note that $q_k=10^{a(k)}$ and therefore we have
\begin{equation}\label{eq:ineq_qn}
	\left| c - \frac{p_k}{q_k} \right| 
	< 2 \cdot 10^{-q_k}.
\end{equation}
Now we put
\[
	\beta=2
	\quad \text{and} \quad 
	\alpha=2^c.
\]
Clearly, $\alpha$ and $\beta$ are larger than 1 and multiplicatively independent. Moreover,
\[
	\frac{\log \alpha}{\log \beta} =c.
\]
Inserting into \eqref{eq:ineq_qn} we get that there are infinitely many $(n,m)=(p_k, q_k) \in \N^2$ with
\[
	\left| \frac{\log \alpha}{\log \beta} - \frac{m}{n} \right|
	< 2 \cdot 10^{-n}.
\]
This implies
\[
	|n \log \alpha - m \log \beta |
	< 2 \cdot \log \beta \cdot n \cdot 10^{-n}.
\]
Since $2 \cdot \log \beta \cdot n \cdot 10^{-n}<0.5$ and $|e^x-1|< 2|x|$ for $0<|x|<0.5$ we obtain
\[
	\left|\frac{\alpha^n}{\beta^m} - 1 \right|
	< 4 \cdot \log \beta \cdot n \cdot 10^{-n}
\]
for infinitely many $(n,m)$. This is equivalent to
\begin{equation}\label{eq:ready}
	|\alpha^n - \beta^m | < \frac{4 \cdot \log \beta \cdot n \cdot \beta^m }{10 ^n}.
\end{equation}
Note that $\frac{m}{n}\approx c \approx 0.1$, so $m<n$. Moreover, recall that $\beta =2$. Therefore, the right hand side of \eqref{eq:ready} becomes arbitrarily small and in particular inequality \eqref{eq:counterI} is satisfied
for infinitely many $(n,m)\in \N^2$.

\section{Auxiliary results}

In the subsequent sections we will heavily rely on results from Diophantine approximation, including results on the irrationality exponent and lower bounds for linear forms in logarithms. We will also need a simple lemma from measure theory and some elementary inequalities. These results are stated in this section. 

We also define $T_{\alpha,\beta}(x)$ as the number of solutions $(n,m)$ in formula \eqref{eq:main} in order to simplify notation.

\subsection{The irrationality exponent}

We recall the classical definition of the irrationality exponent $\mu (\xi)$ of a real number $\xi$ (see for instance \cite[Appendix E]{Bugeaud2012}):
\[
	\mu(\xi)
	= \sup \left\{\mu:  0 < |\xi -\frac{p}{q}|< \frac{1}{q^\mu} 
	~\text{has infinitely many solutions}~ (p,q)\in \Z \times \N \right\}.
\]
The irrationality exponent for a rational number is 1 and for a Liouville number is $\infty$ (by definition of a Liouville number). For all other real numbers $\xi$, we have $2 \leq \mu(\xi) < \infty $. Since the rational numbers and the Liouville numbers have measure 0 in $\R$ (with respect to the Lebesgue measure), almost all real numbers $\xi$ have an irrationality exponent $2 \leq \mu(\xi) < \infty $.

Liouville's approximation theorem says that the irrationality exponent of any real algebraic number is at most its degree. In fact, by Roth's theorem the irrationality exponent of any real algebraic number is equal to 2.
For transcendental numbers it is in general very hard to determine the irrationality exponent. However, a metric result exists: With respect to the Lebesgue measure, almost all real numbers have an irrationality exponent equal to 2 (see for instance \cite[Theorem E.3]{Bugeaud2012}).

Let us finally note that by the definition of the irrationality exponent, for any given irrational real number $\xi$ and $\eps > 0$, there exists a positive constant $c(\xi, \eps)$ such that
\[
	\left|\xi - \frac{p}{q}\right| \geq \frac{c(\xi,\eps)}{q^{\mu(\xi) + \eps}}
	\quad \text{for all }  (p,q) \in \Z \times \N.
\]
However, results such as Roth's theorem are ineffective, i.e.\ the constant $c(\xi, \eps)$ cannot in general be determined effectively.

\subsection{Lower bounds for linear forms in logarithms}

In order to obtain an effective result for special cases of Problem~\ref{probl:main}, we will use Waldschmidt's bound for inhomogeneous linear forms in logarithms \cite{Waldschmidt2000}.

Let us first recall the definition of the \textit{logarithmic height}. 
Let $\gamma$ be an algebraic number of degree $d\geq 1$ with the minimal polynomial
\[
	a_d X^d + \dots +a_1 X + a_0 
	= a_d \prod _{i=1}^{d} (X- \gamma_i),
\]
where $a_0, \dots, a_d$ are relatively prime integers and $\gamma_1, \dots, \gamma_d$ are the conjugates of $\gamma$. Then the logarithmic height of $\gamma$ is given by
\[
	h(\gamma)
	= \frac{1}{d} \left(
		\log |a_d|
		+ \sum_{i=1}^d \log \left( \max \{ 1,|\gamma_i|\} \right)
		\right).
\]

Now we state Waldschmidt's theorem. We have adapted it from \cite[Thm. 2.1]{Bugeaud2018}.

\begin{lem}[Waldschmidt]\label{lem:waldschmidt}
Let $t\geq 1$ and $\alpha_1 \bb \alpha_t$ be non-zero algebraic numbers. Let $\log \alpha_1 \bb \log \alpha_t$ be determinations of their logarithms and assume that $\log \alpha_1 \bb \log \alpha_t$ are linearly independent over $\Q$. Let $\beta_0\bb\beta_t$ be algebraic numbers, not all zero. 
Then we have
\begin{multline*}
	\log |\beta_0 + \beta_1 \log \alpha_1 + \dots + \beta_t \log \alpha_t |
	\geq -2^{t+25} t^{3t+9} D^{t+2} \log A_1 \cdots \log A_t 
		\log B \log E,
\end{multline*}
where $D$ is the degree of the number field $\Q(\alpha_1\bb \alpha_t, \beta_0 \bb \beta_t)$ over $\Q$ and $E, A_1 \bb A_t$ and $B$ are real numbers with
\begin{align*}
	E 
	& \geq \max\{e^{1/D},D\},\\
	\log A_j
	&\geq \max \left\{h(\alpha_j), \frac{e}{D}|\log \alpha_j|, \frac{1}{D}\right\}
	\quad \text{for } 1 \leq j \leq t,\\
	B &\geq \max\left\{
		E,
		\max_{1\leq j \leq t} D \log A_j,
	{\max_{0 \leq j \leq t} e^{h(\beta_j)}}
	\right\}. 
\end{align*}
\end{lem}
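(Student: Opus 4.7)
The plan is to follow the classical Baker–Gel'fond–Schneider strategy for lower bounds on linear forms in logarithms, refined via the interpolation-determinant method (Laurent, Mignotte, Waldschmidt) which is what produces the explicit dependence $2^{t+25}t^{3t+9}D^{t+2}\log A_1\cdots\log A_t\,\log B\,\log E$. Argue by contradiction: suppose
\[
  \Lambda \;=\; \beta_0 + \beta_1\log\alpha_1 + \cdots + \beta_t\log\alpha_t \neq 0
\]
satisfies $\log|\Lambda| < -2^{t+25}t^{3t+9}D^{t+2}\log A_1\cdots\log A_t\log B\log E$. Set $K=\Q(\alpha_1\bb\alpha_t,\beta_0\bb\beta_t)$, so $[K:\Q]=D$, and let $L$ denote the parameter $2^{t+25}t^{3t+9}D^{t+2}\log A_1\cdots\log A_t\log B\log E$.

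The first step is the \emph{auxiliary construction}. Choose parameters $L_0,\ldots,L_t$ (degrees) and $S$ (number of interpolation points) roughly of sizes dictated by balancing $\prod L_j \asymp S\cdot L$, subject to height constraints. Using Siegel's lemma over $K$ applied to the linear system
\[
  F(s) \;=\; \sum_{\lambda_0=0}^{L_0}\cdots\sum_{\lambda_t=0}^{L_t} p(\lambda_0,\ldots,\lambda_t)\, (s\beta_0)^{\lambda_0}\alpha_1^{\lambda_1 s}\cdots\alpha_t^{\lambda_t s} \;=\; 0
  \quad (1\le s\le S),
\]
produce a nonzero integer-coefficient polynomial $P(X_0,\ldots,X_t)$ of controlled height whose associated function $F(z)$ vanishes at the $S$ prescribed points. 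The existence of $P$ follows because the height bound furnished by Siegel's lemma is balanced against the number of constraints.

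The second step is \emph{analytic extrapolation}. Apply the Schwarz lemma (or, in the modern formulation, an interpolation-determinant estimate) to $F$ in a disk of carefully chosen radius. The crucial input is that the smallness assumption on $\Lambda$ lets one replace each occurrence of $\beta_0$ by $-\sum\beta_j\log\alpha_j$ up to an error of size $|\Lambda|$, so $F(s)$ nearly equals a pure product of the form $\prod_j \alpha_j^{\mu_j s}$ for algebraic exponents; one deduces that $F$ is small at many additional integers $s$, hence vanishes there. Iterating this extrapolation step gives that $F$ vanishes at far more points than its analytic complexity should allow, forcing a nontrivial algebraic identity among the $\alpha_j^{\beta_k}$.

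The final step is a \emph{Liouville inequality}: the algebraic number produced is nonzero by the assumed multiplicative structure, yet has height bounded by a controlled function of $\log A_1,\ldots,\log A_t,\log B,D$; comparing with its absolute value (which is exponentially small in $L$) contradicts the trivial lower bound $|\gamma|\ge \exp(-D\,h(\gamma))$ for a nonzero algebraic $\gamma$. The \textbf{main obstacle}, and the reason this theorem is deep rather than routine, is the parameter calibration in steps two and three: one must choose $L_0,\ldots,L_t,S$, and the extrapolation radii so that Siegel's lemma, the Schwarz estimate, and Liouville's inequality all become tight simultaneously, and it is precisely the sharpness of this balance that yields the precise shape of the constant $2^{t+25}t^{3t+9}D^{t+2}\log A_1\cdots\log A_t\log B\log E$ rather than a weaker Baker-style bound.
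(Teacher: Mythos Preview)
The paper does not prove this lemma at all: it is quoted verbatim as a known result, adapted from \cite[Thm.~2.1]{Bugeaud2018} and attributed to Waldschmidt \cite{Waldschmidt2000}, and used as a black box in the applications of Sections~\ref{sec:specialCases} and~\ref{sec:aProblem}. So there is no ``paper's own proof'' to compare your attempt against.

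What you have written is a fair high-level sketch of the Baker--Waldschmidt method (auxiliary function via Siegel's lemma, extrapolation by Schwarz-type estimates, zero/Liouville estimate), and it correctly identifies where the explicit constant comes from. But as a self-contained proof it is far from complete: you never actually specify the parameters $L_0,\ldots,L_t,S$ or the radii, you do not carry out any of the height or analytic estimates, and the phrase ``forcing a nontrivial algebraic identity among the $\alpha_j^{\beta_k}$'' hides the entire zero-lemma machinery (which in the inhomogeneous case with the free term $\beta_0$ is the genuinely delicate part). In short, this is an outline of the right strategy, not a proof, and in any case the paper expects you simply to cite the result rather than reprove it.
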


\subsection{A lemma from measure theory and some auxiliary inequalities}

\begin{lem}\label{lem:measure}
Let $\Gamma$ be a null set in $\R$ (with respect to the Lebesgue measure). Then the set of all $(\alpha,\beta) \in \C^2$ with $|\alpha|,|\beta|>1$ and $\log |\alpha| / \log |\beta| \in \Gamma$ is a null set in $\C^2$ (with respect to the 4-dimensional Lebesgue measure).
\end{lem}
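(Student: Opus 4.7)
The plan is to prove this by slicing with Fubini's theorem over the $\beta$-coordinate and using the fact that null sets are preserved under the relevant diffeomorphisms.

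First, fix any $\beta\in \C$ with $|\beta|>1$, so $\log|\beta|>0$, and consider the horizontal slice
\[
	S_\beta = \{\alpha \in \C : |\alpha|>1,\ \log|\alpha|/\log|\beta| \in \Gamma\}.
\]
Since multiplication by the nonzero constant $\log|\beta|$ is a linear homeomorphism of $\R$, the set $\log|\beta|\cdot \Gamma$ is again a null set. Then since $t\mapsto e^t$ is a diffeomorphism $\R \to (0,\infty)$ (hence sends null sets to null sets), the set
\[
	F_\beta = \exp\bigl(\log|\beta|\cdot\Gamma\bigr)\cap (1,\infty)
\]
is a null set in $(1,\infty)$.

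Next I observe that $S_\beta = \{\alpha \in \C : |\alpha|\in F_\beta\}$ is a union of circles, and passing to polar coordinates in $\C$ gives
\[
	\lambda_2(S_\beta) = \int_{F_\beta}\int_0^{2\pi} r\,\diff\theta\,\diff r
	= 2\pi \int_{F_\beta} r\,\diff r = 0,
\]
where $\lambda_2$ denotes 2-dimensional Lebesgue measure. Thus every horizontal slice $S_\beta$ is a planar null set.

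Finally, by Tonelli's theorem applied to the indicator function of $S = \{(\alpha,\beta)\in\C^2 : |\alpha|,|\beta|>1,\ \log|\alpha|/\log|\beta|\in \Gamma\}$, we have
\[
	\lambda_4(S) = \int_{\{|\beta|>1\}} \lambda_2(S_\beta)\,\diff\lambda_2(\beta) = 0,
\]
which is the claim. The only mildly delicate point is that we must use $|\beta|>1$ (so that $\log|\beta|\neq 0$) in order to conclude that $\log|\beta|\cdot \Gamma$ is null; this is guaranteed by the hypothesis of the lemma, so no real obstacle arises.
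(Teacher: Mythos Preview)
Your proof is correct and follows essentially the same route as the paper's: Fubini/Tonelli together with the facts that smooth maps and nonzero dilations preserve null sets, plus polar coordinates to pass from radii to $\C$. The only difference is organizational---the paper first shows the set $A=\{(x,y)\in\R_{>1}^2:\log x/\log y\in\Gamma\}$ is null in $\R^2$ and then lifts to $\C^2$ via polar coordinates in both variables, whereas you slice directly by $\beta\in\C$ and apply polar coordinates only in the $\alpha$-variable; the ingredients are identical.
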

\begin{proof}
First, let 
\[
	A=\left\{ (x,y)\in \R_{>1}^2 \fdg \log x / \log y \in \Gamma \right\}.
\]
We want to show that $A$ is a null set in $\R^2$, i.e.\
\[
	\lambda(A)	
	= \int_{\R_{>1}^2}\ind_A(x,y) \diff^2(x,y)
	=0,
\]
where $\ind_A(x,y)$ is the indicator function corresponding to $A$. 
By Fubini's Theorem we have
\[
	\int_{\R_{>1}^2} \ind_A(x,y) \diff^2(x,y)
	= \int_{\R_{>1}} \int_{\R_{>1}} \ind_A(x,y) \diff x \diff y.
\]
For fixed $y>1$ the function $x \mapsto \log x /\log y$ is differentiable and therefore the set of all $x$ for which $\log x /\log y \in \Gamma$ is a null set. Thus, in the above integral, the inner integral is always zero and so the whole integral is zero and $A$ is indeed a null set.

Now in order to prove the lemma, we need to show that
\[
	B = \{(\alpha,\beta)\in \C^2 \fdg (|\alpha|,|\beta|)\in A\}
\]
is a null set in $\C^2$ (seen as $\R^4$ with the Lebesgue measure). Let $\ind_B(\alpha_1, \alpha_2, \beta_1, \beta_2)$ be the indicator function of $B$, i.e.\
\[
	\ind_B(\alpha_1, \alpha_2, \beta_1, \beta_2)
	= \begin{cases}
	1 & \text{if } (|\alpha_1 + i \alpha_2|,|\beta_1 + i \beta_2|) \in A,\\
	0 & \text{otherwise}.
	\end{cases}
\]
Then for the Lebesgue measure of $B$ we have
\begin{align*}
	\lambda(B)
	&= \int_{\R^4} \ind_B(\alpha_1, \alpha_2, \beta_1, \beta_2) \diff^4 (\alpha_1, \alpha_2, \beta_1, \beta_2)\\
	&= \int_0^\infty \int_0^{2\pi} \int_0^\infty \int_0^{2\pi}
		\ind_A (r_\alpha, r_\beta) r_\alpha r_\beta 
		\diff \varphi_\alpha \diff r_\alpha \diff \varphi_\beta \diff r_\beta \\
	&= \int_0^{2\pi} \int_0^{2\pi} \left(
			\int_0^\infty \int_0^\infty 
		\ind_A (r_\alpha, r_\beta) r_\alpha r_\beta 
		\diff r_\alpha \diff r_\beta \right) 
			\diff \varphi_\alpha \diff \varphi_\beta,
\end{align*}
where we used Fubini's theorem, polar coordinates and the definition of $B$. Now since $A$ is a null set in $\R^2$, clearly the inner part of the last integral is zero, so indeed $\lambda(B)=0$.
\end{proof}

\begin{lem}\label{lem:ineq}
Let $k$, $c$ and $d$ be positive constants. 
Suppose that $n \geq N=N(k,c,d)$ is a large number (to be precise, we need $n$ to satisfy $c \log n \geq 2$ and $n \geq k^2 c^2 (d+2)^2e^{d/c}(\log n)^2$). Suppose that
\begin{equation}\label{eq:ineq_assump}
	n \leq kz + c \log n + d
\end{equation}
for some $z\geq 2/k$.
Then
\[
	n \leq kz + 2c \log z.
\]
\end{lem}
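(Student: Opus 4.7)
My plan is to reduce the desired inequality $n \leq kz + 2c\log z$ to the single comparison $c\log n + d \leq 2c\log z$, which combined with the hypothesis $n \leq kz + c\log n + d$ immediately yields the conclusion. Since $c\log n + d \leq 2c\log z$ is equivalent to
\[
z \geq \sqrt{n}\,e^{d/(2c)},
\]
the entire task becomes proving this lower bound on $z$.

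Two lower bounds on $z$ are available: the assumption $z \geq 2/k$, and the rearrangement $z \geq (n - c\log n - d)/k$ of $n \leq kz + c\log n + d$. I would split according to whether the first bound already suffices. \textbf{Case 1:} if $2/k \geq \sqrt{n}\,e^{d/(2c)}$, then $z \geq 2/k$ gives the target immediately.

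\textbf{Case 2:} $k\sqrt{n}\,e^{d/(2c)} > 2$. The key manipulation is to take square roots in the hypothesis $n \geq k^2 c^2(d+2)^2 e^{d/c}(\log n)^2$, obtaining $\sqrt{n} \geq kc(d+2)e^{d/(2c)}\log n$, equivalently
\[
k\sqrt{n}\,e^{d/(2c)} \leq \frac{n}{c(d+2)\log n}.
\]
Combining this with the Case~2 inequality yields $n > 2c(d+2)\log n$. Using $d+2 \geq 2$ gives $c\log n < n/4$; using $c\log n \geq 2$ gives $n > 4(d+2) > 4d$, i.e.\ $d < n/4$. Hence $n - c\log n - d > n/2$. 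On the other hand, the displayed bound together with $c(d+2)\log n \geq 4$ shows $k\sqrt{n}\,e^{d/(2c)} \leq n/4$. Comparing, $n - c\log n - d > n/2 > k\sqrt{n}\,e^{d/(2c)}$; dividing by $k$ and invoking $z \geq (n - c\log n - d)/k$ produces $z > \sqrt{n}\,e^{d/(2c)}$, as required.

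The main obstacle is recognising the correct form of the growth hypothesis: after taking its square root it becomes a bound on $k\sqrt{n}\,e^{d/(2c)}$, which is precisely the quantity one needs to compare against $n - c\log n - d$ in order to conclude. Once this reformulation is made, the case split on $2/k$ versus $\sqrt{n}\,e^{d/(2c)}$ is natural, and the elementary inequalities $d+2 \geq 2$ and $c\log n \geq 2$ combine cleanly to peel off $c\log n$ and $d$ as bounded fractions of $n$.
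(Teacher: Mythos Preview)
Your proof is correct. Both your argument and the paper's reduce to the same intermediate claim, namely $c\log n + d \leq 2c\log z$ (equivalently $\log n \leq 2\log z - d/c$), and then substitute back into the hypothesis $n \leq kz + c\log n + d$. The difference lies in how this intermediate bound is obtained.

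The paper takes the logarithm of the hypothesis itself: using $\log(r+s)\leq \log r + \log s$ for $r,s\geq 2$ (valid here since $kz\geq 2$ and $c\log n\geq 2$), one gets
\[
\log n \leq \log(kz) + \log(c\log n) + \log(d+2) = \log z + \bigl(\log k + \log c + \log\log n + \log(d+2)\bigr),
\]
and the growth assumption $n \geq k^2c^2(d+2)^2 e^{d/c}(\log n)^2$ says exactly that the bracketed term is at most $\tfrac12\log n - \tfrac{d}{2c}$, yielding $\log n \leq 2\log z - d/c$ in one line. Your route instead rewrites the target as $z \geq \sqrt{n}\,e^{d/(2c)}$, splits on whether $2/k$ already dominates this, and in the main case compares $n - c\log n - d$ directly with $k\sqrt{n}\,e^{d/(2c)}$ via the square root of the growth assumption. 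Your approach is a bit longer and requires the case split, but it has the minor advantage of making transparent exactly where each piece of the largeness hypothesis (the factor $k^2c^2(d+2)^2e^{d/c}$ and the condition $c\log n\geq 2$) is consumed; the paper's log-subadditivity trick packages all of this into a single step.
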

\begin{proof}
Note that for $r,s\geq 2$ we have $\log(r+s)\leq \log r + \log s$. By assumption, $n$ and $z$ are large, in particular $kz \geq 2$ and $c\log n \geq 2$. Thus, using assumption \eqref{eq:ineq_assump}, we have
\begin{align*}
	\log n
	&\leq \log (kz + c \log n + d)\\
	&\leq \log (kz) + \log (c\log n) + \log (d+2)\\
	&= \log k + \log z + \log c + \log \log n + \log (d+2)\\
	&\leq \log z + \frac{1}{2}\log n - \frac{d}{2c}
\end{align*}
for $n\geq N$. Thus $\log n \leq 2 \log z - \frac{d}{c}$ and using assumption \eqref{eq:ineq_assump} again we get
\begin{align*}
	n 
	&\leq kz + c \log n + d\\
	&\leq kz + c \left( 2 \log z  - \frac{d}{c} \right) + d\\
	&=kz + 2 c \log z.
\end{align*}
\end{proof}

Next, we make a definition in order to simplify notation when dealing with Problem \ref{probl:main}.

Let $\alpha,\beta \in \C$ with $|\alpha|>1$ and $|\beta|>1$. We define
\[
	T_{\alpha,\beta}(x)
	= \#\{(n, m) \in \N^2 \fdg | \alpha^n - \beta^m | \leq x\}.
\]

For arbitrary $\alpha$ and $\beta$ there is a trivial lower bound for $T_{\alpha,\beta}(x)$:

\begin{lem}\label{lem:lowerBound}
Let $\alpha,\beta \in \C$ with $|\alpha|>1$ and $|\beta|>1$. Then
\[
	T_{\alpha,\beta}(x)
	\geq \frac{(\log x)^2}{\log |\alpha|\cdot \log  |\beta| } + O(\log x)
\]
and the implied constants are effectively computable.
\end{lem}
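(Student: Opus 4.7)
The plan is to use the triangle inequality to produce a large family of guaranteed solutions. Specifically, any pair $(n,m) \in \N^2$ satisfying the two separate inequalities $|\alpha|^n \leq x/2$ and $|\beta|^m \leq x/2$ automatically satisfies $|\alpha^n - \beta^m| \leq |\alpha|^n + |\beta|^m \leq x$, so it is counted by $T_{\alpha,\beta}(x)$. It therefore suffices to count such pairs.

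First I would count the one-variable sets separately. Assuming $x$ is large enough (which is absorbed into the $O(\log x)$ error), the number of $n \in \N$ with $|\alpha|^n \leq x/2$ equals $\lfloor \log(x/2)/\log|\alpha|\rfloor$, which equals $\log x / \log |\alpha| + O(1)$ with an effective implied constant depending only on $|\alpha|$. The analogous statement holds for $m$ with $|\beta|$. Since independent choices of $n$ and $m$ give distinct admissible pairs, the number of such pairs is at least
\[
 \left(\frac{\log x}{\log |\alpha|} + O(1)\right)\left(\frac{\log x}{\log |\beta|} + O(1)\right)
 = \frac{(\log x)^2}{\log|\alpha| \cdot \log|\beta|} + O(\log x).
\]

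Expanding the product is routine: the cross terms contribute $O(\log x)$ and the constant term contributes $O(1)$, both with effectively computable implied constants that depend only on $|\alpha|$ and $|\beta|$. Combined with the triangle-inequality observation, this yields exactly the claimed lower bound for $T_{\alpha,\beta}(x)$.

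There is no real obstacle here; the estimate relies only on the triangle inequality and on counting integer points in intervals, and no hypothesis on the multiplicative relation between $\alpha$ and $\beta$ is needed. This is precisely why the bound holds universally and can serve as the easy half of the asymptotic in \eqref{eq:main}.
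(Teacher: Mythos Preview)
Your proposal is correct and follows essentially the same argument as the paper: use the triangle inequality to show that any pair with $|\alpha|^n\leq x/2$ and $|\beta|^m\leq x/2$ is a solution, then count such pairs as a product of two intervals of length $\log x/\log|\alpha|+O(1)$ and $\log x/\log|\beta|+O(1)$. The only cosmetic difference is that the paper writes the interval endpoints as $(\log x-\log 2)/\log|\alpha|$ rather than invoking a floor, but the content is identical.
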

\begin{proof}
Suppose that
\begin{align*}
	n\leq \frac{\log x-\log 2}{\log |\alpha|}
	\quad \text{and} \quad
	m\leq \frac{\log x-\log 2}{\log |\beta|}.
\end{align*}
Then 
\begin{align*}
	|\alpha^n - \beta^m|
	\leq |\alpha|^n + |\beta|^m 
	\leq \frac{x}{2}+\frac{x}{2}
	=x.
\end{align*}
But the number of such pairs $(n,m)$ is greater than
\[
	\left(\frac{\log x-\log 2}{\log |\alpha|}-1\right)
	\left(\frac{\log x-\log 2}{\log |\beta|}-1\right)
	= \frac{(\log x)^2}{\log |\alpha|\cdot \log  |\beta| } + O(\log x),
\]
which proves the lower bound for $T_{\alpha,\beta}(x)$.
\end{proof}

\begin{lem}\label{lem:absVal}
For any $\alpha,\beta \in \C$ with $|\alpha|>1$ and $|\beta|>1$ we have
\[
	T_{\alpha,\beta}(x) \leq T_{|\alpha|,|\beta|}(x).
\]
\end{lem}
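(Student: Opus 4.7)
The plan is to invoke the reverse triangle inequality. For any complex numbers $u,v$ we have $|u-v| \geq \bigl||u|-|v|\bigr|$, and applied with $u=\alpha^n$, $v=\beta^m$ this gives
\[
	|\alpha^n - \beta^m| \geq \bigl||\alpha|^n - |\beta|^m\bigr|.
\]

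Hence whenever a pair $(n,m) \in \N^2$ satisfies $|\alpha^n - \beta^m|\leq x$, it also satisfies $\bigl||\alpha|^n - |\beta|^m\bigr| \leq x$. In other words, the set
\[
	\{(n,m)\in \N^2 \fdg |\alpha^n-\beta^m|\leq x\}
\]
is contained in the set
\[
	\{(n,m)\in \N^2 \fdg \bigl||\alpha|^n-|\beta|^m\bigr|\leq x\}.
\]
Taking cardinalities yields $T_{\alpha,\beta}(x) \leq T_{|\alpha|,|\beta|}(x)$, as desired.

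There is essentially no obstacle here — the lemma is a purely formal consequence of $|u-v|\geq ||u|-|v||$, included mainly so that later results (for example metric statements or effective bounds) may be reduced to the case of positive real bases $|\alpha|,|\beta|>1$ without loss of generality.
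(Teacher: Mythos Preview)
Your proof is correct and follows exactly the same approach as the paper: apply the reverse triangle inequality $\left|\,|\alpha|^n - |\beta|^m\,\right| \leq |\alpha^n - \beta^m|$ to obtain the set inclusion, then compare cardinalities.
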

\begin{proof}
This follows immediately from the reverse triangle inequality:
\[
	\left| |\alpha|^n - |\beta|^m \right|
	\leq \left| \alpha^n - \beta^m \right|.
\]
\end{proof}

\section{A sufficient condition and a metric result}\label{sec:metric}
In this section, we first prove a sufficient condition for formula \eqref{eq:main} to be true. Then we show that formula \eqref{eq:main} is true for almost all $(\alpha,\beta)\in \C^2$ with $|\alpha|>1$ and $|\beta|>1$. 

\begin{thm}\label{thm:sufficient}
Let $(\alpha,\beta) \in \C^2$ with $|\alpha|>1$ and $|\beta|>1$. Assume that   $2 \leq \mu(\frac{\log |\alpha|}{\log |\beta|}) < \infty $, then we have
\[
	T_{\alpha,\beta}(x)
	= \frac{(\log x)^2}{\log |\alpha| \log |\beta|} + O(\log x \cdot \log \log x)
\]
for $x$ large enough. 

\end{thm}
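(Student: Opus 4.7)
The plan is to establish a matching upper bound for Lemma~\ref{lem:lowerBound}. By Lemma~\ref{lem:absVal}, it suffices to treat the case $\alpha,\beta\in\R$ with $\alpha,\beta>1$. I would partition the pairs $(n,m)\in\N^2$ with $|\alpha^n-\beta^m|\leq x$ into three groups:
(I) both $\alpha^n,\beta^m\leq 2x$;
(II) exactly one of $\alpha^n,\beta^m$ exceeds $2x$;
(III) both $\alpha^n,\beta^m>2x$.

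Group (I) contributes at most $\lfloor\log(2x)/\log\alpha\rfloor\cdot\lfloor\log(2x)/\log\beta\rfloor=(\log x)^2/(\log\alpha\cdot\log\beta)+O(\log x)$ pairs. In group (II), say $\alpha^n\leq 2x<\beta^m$; then $|\alpha^n-\beta^m|\leq x$ forces $\beta^m\leq 3x$, so $m$ is confined to an $O(1)$-length interval while $n=O(\log x)$, contributing $O(\log x)$ in total (the symmetric subcase is analogous).

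The crucial case is (III). Here $\min(\alpha^n,\beta^m)>2x$, so $|\alpha^n/\beta^m-1|\leq x/\min(\alpha^n,\beta^m)\leq 1/2$, and the elementary inequality $|\log(1+y)|\leq 2|y|$ for $|y|\leq 1/2$ gives
\[
    |\Lambda|:=|n\log\alpha-m\log\beta|\leq \frac{2x}{\min(\alpha^n,\beta^m)}.
\]
Assuming $\alpha^n\leq\beta^m$ (the other case is symmetric), I would divide by $n\log\beta$ and invoke the irrationality-exponent lower bound: setting $\mu=\mu(\log\alpha/\log\beta)$ and choosing any $\eps>0$, there is a constant $c(\alpha,\beta,\eps)>0$ with
\[
    \frac{c}{n^{\mu+\eps}}\leq\left|\frac{\log\alpha}{\log\beta}-\frac{m}{n}\right|\leq\frac{2x}{n\log\beta\cdot\alpha^n},
\]
yielding $n\log\alpha\leq\log x+(\mu+\eps-1)\log n+O(1)$. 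Applying Lemma~\ref{lem:ineq} then squeezes $n$ into the range $n\leq\log x/\log\alpha+O(\log\log x)$. Combined with the lower bound $n>\log(2x)/\log\alpha$ coming from $\alpha^n>2x$, only $O(\log\log x)$ integers $n$ survive, and for each such $n$ the constraint $|\Lambda|<1$ pins down $m$ up to $O(1)$ choices. Thus group (III) contributes $O(\log\log x)$, and summing the three groups yields $T_{\alpha,\beta}(x)\leq(\log x)^2/(\log\alpha\cdot\log\beta)+O(\log x)$, which together with Lemma~\ref{lem:lowerBound} even produces an error term stronger than the stated $O(\log x\cdot\log\log x)$.

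The main obstacle is the application of the irrationality-exponent bound in group (III): one has to verify that the inequality $n\log\alpha\leq\log x+(\mu+\eps-1)\log n+O(1)$ is of precisely the form required by Lemma~\ref{lem:ineq}, so that it can be bootstrapped to $n\leq\log x/\log\alpha+O(\log\log x)$, and that for each remaining $n$ the admissible $m$ really are limited to $O(1)$ values. A secondary subtlety is handling the two subcases $\alpha^n\leq\beta^m$ and $\beta^m\leq\alpha^n$ symmetrically, since the bound on $|\Lambda|$ naturally involves only the smaller of the two powers.
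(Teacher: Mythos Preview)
Your argument is correct and in fact yields a sharper error term than the paper's proof. The paper proceeds more crudely: it shows that \emph{every} solution $(n,m)$ of $|\alpha^n-\beta^m|\leq x$ satisfies $n\leq\log x/\log\alpha+O(\log\log x)$ and $m\leq\log x/\log\beta+O(\log\log x)$, and then simply multiplies these bounds, arriving at the error $O(\log x\cdot\log\log x)$. Your three-group decomposition is more refined: the main term comes entirely from group~(I), group~(II) is $O(\log x)$ by the easy observation that the ``large'' power is squeezed into a bounded ratio interval, and in group~(III) you exploit the additional information that $|\Lambda|<1$ pins $m$ to $O(1)$ values once $n$ is fixed, so the $O(\log\log x)$ possible values of $n$ give only $O(\log\log x)$ pairs rather than being multiplied against a second $O(\log\log x)$ factor. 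The net effect is an upper bound $T_{\alpha,\beta}(x)\leq(\log x)^2/(\log\alpha\log\beta)+O(\log x)$, matching Lemma~\ref{lem:lowerBound} exactly. The paper's route is slightly shorter to write down, but your decomposition genuinely buys a $\log\log x$ factor in the error term; the only caveat is that the improved constant is still ineffective, since both approaches rest on the same irrationality-exponent lower bound.
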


\begin{proof}
By Lemma \ref{lem:lowerBound} we have
\[
	T_{\alpha,\beta}(x) 
	\geq \frac{(\log x)^2}{\log |\alpha|\log |\beta|} + O(\log x)
\]
for all $\alpha,\beta$.
Therefore, we only need to show that
\[
	T_{\alpha,\beta}(x)
	\leq \frac{(\log x)^2}{\log |\alpha| \log |\beta|} + O(\log x \cdot \log \log x).
\]

By Lemma \ref{lem:absVal} we may assume that $\alpha, \beta$ are real and positive.
Suppose that 
\begin{equation}\label{eq:metric_main}
	|\alpha^n-\beta^m|\leq x.
\end{equation}
We will show that  $n \leq  \frac{\log x}{\log \alpha}  + O(\log \log x)$ and $m\leq \frac{\log x}{\log \beta} + O(\log \log x)$. Then counting the possible solutions $(n,m)$ will yield the required upper bound for $T_{\alpha,\beta}(x)$.

If $\alpha^n\leq 2x$, then we immediately obtain 
\[
	n 
	\leq \frac{\log x}{\log \alpha} + \frac{\log 2}{\log \alpha}
	= \frac{\log x}{\log \alpha} + O(\log \log x).
\]	
From now on assume that
$\alpha^n > 2x$.

We divide \eqref{eq:metric_main} by $\alpha^n$, obtaining
\[
	\left| \frac{\beta^m}{\alpha^n} - 1 \right|
	\leq \frac{x}{\alpha^n}.
\]
By our assumption, the expressions above are $<0.5$.
Since $|\log (y+1)|\leq 2|y|$ for $|y|<0.5$ we have
\begin{equation*}
	\left| m \log \beta - n \log \alpha\right|
	\leq \frac{2x}{\alpha^n}.
\end{equation*}
Dividing by $n$ and $\log \beta$ we obtain
\begin{equation}\label{eq:metric_beforeKhinch}
	\left| \frac{\log \alpha}{\log \beta} - \frac{m}{n} \right|
	\leq \frac{2x}{\log \beta \cdot n \cdot \alpha^n}.
\end{equation}
Denote $\mu = \mu(\log \alpha/\log \beta)$, then we have
\[
	\left|\frac{\log \alpha}{\log \beta} - \frac{m}{n}\right| \geq \frac{c(\alpha,\beta)}{n^{\mu + 0.1}}
	\quad \text{for all }  (m,n) \in \Z \times \N.
\]
Combining this with \eqref{eq:metric_beforeKhinch} we obtain
\[
	\frac{c(\alpha,\beta)}{n^{\mu+0.1}}
	\leq \frac{2x}{\log \beta \cdot n \cdot \alpha^n}.
\]
This implies
\[
	\alpha^n
	\leq \tilde{c}(\alpha,\beta)\cdot x\cdot n^{\mu -0.9}
\]
and thus
\[
	n
	\leq \frac{\log x}{\log \alpha} + \frac{\mu -0.9}{\log \alpha}\log n + \hat{c}(\alpha,\beta).
\]
By Lemma \ref{lem:ineq} with $z=\log x$, $k=1/\log \alpha$, $c=(\mu -0.9)/\log \alpha$ and $d= \hat{c}(\alpha,\beta)$ this implies 
\begin{equation}\label{eq:metric_n}
	n
	\leq \frac{\log x}{\log \alpha} + O(\log \log x),
\end{equation}
where the implied constant depends on $\hat{c}(\alpha,\beta)$.

By completely analogous arguments (note that $2 \leq \mu(\frac{\log \alpha}{\log \beta}) < \infty $ if and only if $2 \leq \mu(\frac{\log \beta}{\log \alpha}) < \infty $) we also get
\begin{equation}\label{eq:metric_m}
	m\leq \frac{\log x}{\log \beta} + O(\log \log x).
\end{equation}

Therefore, all solutions $(n, m)$ to the Diophantine inequality $|\alpha^n - \beta^m| \leq x$ 
have the properties \eqref{eq:metric_n} and \eqref{eq:metric_m}.
But there are at most
\[
	\frac{(\log x)^2}{\log \alpha \log \beta} + O(\log x \cdot \log \log x)
\]
such solutions. Thus
\[
	T_{\alpha,\beta}(x)
	\leq \frac{(\log x)^2}{\log \alpha \log \beta} + O(\log x \cdot \log \log x),
\]
which completes the proof of Theorem \ref{thm:sufficient}.
\end{proof}

\begin{cor}
For almost all $(\alpha,\beta) \in \C^2$ (with respect to the 4-dimensional Lebesgue measure) with $|\alpha|>1$ and $|\beta|>1$ we have
\begin{equation}\label{eq:cor}
	T_{\alpha,\beta}(x)
	= \frac{(\log x)^2}{\log |\alpha| \log |\beta|} + O(\log x \cdot \log \log x)
\end{equation}
for $x$ large enough.
\end{cor}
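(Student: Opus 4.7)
The plan is to combine Theorem~\ref{thm:sufficient} with Lemma~\ref{lem:measure}, using the classical metric fact recalled in Section~3 that almost every real number has finite irrationality exponent.

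Concretely, I would set
\[
    \Gamma = \{\xi \in \R \fdg \mu(\xi) = \infty\} \cup \Q,
\]
which is a null set in $\R$: $\Q$ is countable, and the set of Liouville numbers has Lebesgue measure zero (in fact, by the classical theorem cited in the excerpt, almost all reals satisfy $\mu(\xi) = 2$). Applying Lemma~\ref{lem:measure} to $\Gamma$, the exceptional set
\[
    B = \left\{(\alpha,\beta) \in \C^2 \fdg |\alpha|>1,\ |\beta|>1,\ \frac{\log|\alpha|}{\log|\beta|} \in \Gamma\right\}
\]
is a null set in $\C^2$ with respect to the 4-dimensional Lebesgue measure.

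For every pair $(\alpha,\beta)$ with $|\alpha|,|\beta|>1$ that lies outside $B$, the ratio $\log|\alpha|/\log|\beta|$ is irrational and non-Liouville, so $2 \leq \mu(\log|\alpha|/\log|\beta|) < \infty$. Theorem~\ref{thm:sufficient} then yields the asymptotic formula~\eqref{eq:cor} directly. One also has to dispose of the boundary case $|\alpha|=1$ or $|\beta|=1$, but this is a 3-dimensional real submanifold of $\C^2$ and hence a null set, so it can simply be absorbed into the exceptional set.

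I do not expect any genuine obstacle: the two substantive ingredients (Theorem~\ref{thm:sufficient} and Lemma~\ref{lem:measure}) have already been established, and the metric statement about the irrationality exponent is classical. The only thing to verify carefully is that the null set on the real line is exactly the one for which Theorem~\ref{thm:sufficient} is inapplicable, which is precisely $\Q \cup \{\xi : \mu(\xi) = \infty\}$.
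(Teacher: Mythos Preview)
Your proposal is correct and essentially identical to the paper's own proof: both define $\Gamma = \Q \cup \{\xi : \mu(\xi)=\infty\}$, invoke Lemma~\ref{lem:measure} to conclude that the corresponding set of pairs $(\alpha,\beta)$ is null in $\C^2$, and then apply Theorem~\ref{thm:sufficient} on the complement. Your remark about the boundary $|\alpha|=1$ or $|\beta|=1$ is harmless but unnecessary, since the statement already restricts to $|\alpha|,|\beta|>1$.
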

\begin{proof}
Let $\Gamma = \Q \cup \{\xi \in \R \fdg \mu(\xi) = \infty \}$. This is a null set in $\R$ with respect to the Lebesgue measure.
By Lemma~\ref{lem:measure}, the set  $\{ (\alpha,\beta)\in \C_{|\cdot|>1}^2 \fdg \log |\alpha| / \log |\beta| \in \Gamma \}$ is a null set in $\C^2$. Thus, the condition $2 \leq \mu({\log |\alpha|}/{\log |\beta|}) < \infty $ from Theorem \ref{thm:sufficient} is satisfied by almost all $(\alpha,\beta) \in \C^2$, so formula \eqref{eq:cor} indeed holds for almost all $(\alpha,\beta) \in \C^2_{|\cdot|>1}$.
\end{proof}

\section{Some special cases}\label{sec:specialCases}

First, we consider the case where $\alpha$ is algebraic and $\beta$ is transcendental of the form
$\beta=e^\gamma$, where $\gamma$ is an algebraic number.

\begin{thm}\label{thm:aeb}
Let $\alpha$ and $\gamma$ be two algebraic numbers and suppose that  $|\alpha| >1$ and $\Re(\gamma)>0$.
Let
\[
	T(x)=\# \{ (n,m)\in \N^2 \fdg |\alpha^n-(e^\gamma)^m|\leq x \}.
\]
Then we have
\[
	T(x)
	= \frac{(\log x)^2}{(\log |\alpha|) \cdot \Re(\gamma)} + O(\log x \cdot \log \log x)
\]
for $x$ large enough
and the implied constants are effectively computable.
\end{thm}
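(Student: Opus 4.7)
The plan is to adapt the proof of Theorem \ref{thm:sufficient}, replacing the ineffective use of the irrationality exponent by Waldschmidt's effective lower bound (Lemma \ref{lem:waldschmidt}). By Lemma \ref{lem:absVal} it suffices to work with the positive reals $A := |\alpha|$ and $B := |e^{\gamma}| = e^{\Re(\gamma)}$, so that $\log B = \Re(\gamma)$. Note that $A$ is algebraic (since $|\alpha|^2 = \alpha\bar\alpha$), and $\Re(\gamma) = (\gamma+\bar\gamma)/2$ is algebraic too. Lemma \ref{lem:lowerBound} supplies the lower bound $T(x) \geq \frac{(\log x)^2}{(\log|\alpha|)\,\Re(\gamma)} + O(\log x)$, so only the matching upper bound requires work.

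Assume $|A^n - B^m| \leq x$. If $A^n \leq 2x$ then immediately $n \leq \log x/\log|\alpha| + O(1)$, so assume $A^n > 2x$. Exactly as in the proof of Theorem \ref{thm:sufficient}, dividing by $A^n$ and applying $|\log(1+y)| \leq 2|y|$ yields
\[
	|\Lambda| := \left| m\,\Re(\gamma) - n \log|\alpha| \right| \leq \frac{2x}{|\alpha|^n}.
\]

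Next I would apply Lemma \ref{lem:waldschmidt} to $\Lambda$, viewed as an inhomogeneous linear form $\beta_0 + \beta_1 \log \alpha_1$ with $t=1$, $\alpha_1 = |\alpha|$, $\beta_0 = m\,\Re(\gamma)$, $\beta_1 = -n$. The form is nonzero: if $\Lambda = 0$ then $\log|\alpha|/\Re(\gamma) \in \Q$, but by the Hermite--Lindemann theorem $\log|\alpha|$ is transcendental (since $|\alpha|$ is algebraic with $|\alpha| \notin \{0,1\}$), while $\Re(\gamma)$ is a nonzero algebraic number, making the ratio transcendental. The parameters $D$, $A_1$, $E$ can be chosen as constants depending only on $\alpha$ and $\gamma$. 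Since $h(\beta_1) = \log n$ and $h(\beta_0) \leq \log m + h(\Re(\gamma))$, the parameter $B$ can be taken as $C_1 \max(m,n)$ for an effective constant $C_1$. Lemma \ref{lem:waldschmidt} then yields
\[
	|\Lambda| \geq \frac{C_2}{(\max(m,n))^{C_3}}
\]
for effective positive constants $C_2, C_3$ depending only on $\alpha$ and $\gamma$.

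From $B^m \leq A^n + x \leq 2A^n$ I get $m \leq C_4 n$, hence $\max(m,n) \leq C_4 n$. Combining the two bounds for $|\Lambda|$ gives $|\alpha|^n \leq C_5 \, x \, n^{C_3}$, so
\[
	n\log|\alpha| \leq \log x + C_3\log n + C_6,
\]
and Lemma \ref{lem:ineq} (with $z = \log x$) gives $n \leq \frac{\log x}{\log|\alpha|} + O(\log\log x)$. The symmetric argument bounds $m \leq \frac{\log x}{\Re(\gamma)} + O(\log\log x)$. Counting the admissible pairs and comparing with the lower bound from Lemma \ref{lem:lowerBound} completes the proof. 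The main technical point is the Waldschmidt application: one has to verify nonvanishing of $\Lambda$ (supplied by Hermite--Lindemann) and track that the parameter $B$ grows only polynomially in $\max(m,n)$, so that the resulting lower bound is polynomial with effective constants.
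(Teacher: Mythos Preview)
Your proposal is correct and follows essentially the same route as the paper's proof: reduce to real moduli via Lemma~\ref{lem:absVal}, split off the trivial case $A^n\le 2x$, pass to the linear form $\Lambda = m\,\Re(\gamma)-n\log|\alpha|$, and replace the irrationality-exponent step of Theorem~\ref{thm:sufficient} by Waldschmidt's bound (Lemma~\ref{lem:waldschmidt}) with $t=1$, $\alpha_1=|\alpha|$, $\beta_0=m\,\Re(\gamma)$, $\beta_1=-n$, before finishing with Lemma~\ref{lem:ineq}. The only organisational differences are that you make explicit two points the paper leaves tacit---the algebraicity of $|\alpha|$ and $\Re(\gamma)$, and the nonvanishing of $\Lambda$ via Hermite--Lindemann---and that you obtain the $m$-bound by a symmetric case split on $B^m\le 2x$, whereas the paper stays inside the single split on $\alpha^n$ and reads off $m\le \frac{\log x}{\Re(\gamma)}+O(\log\log x)$ directly from the relation $m<n\cdot\frac{\log|\alpha|}{\Re(\gamma)}+O(1)$ once $n$ is bounded.
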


\begin{proof}
Since $\log |e^\gamma|=\Re(\gamma)$, by Lemma \ref{lem:lowerBound} we have
\[
	T(x) \geq \frac{(\log x)^2}{(\log |\alpha|) \cdot \Re(\gamma)} + O(\log x).
\]
Therefore, we only need to show that
\[
	T(x)
	\leq \frac{(\log x)^2}{(\log |\alpha|) \cdot \Re(\gamma)} + O(\log x \cdot \log \log x)
\]
and by Lemma \ref{lem:absVal} we may assume that $\alpha$ and $e^\gamma$ are real and positive, which means that $\gamma$ is real and positive as well.

Suppose that 
\begin{equation}\label{eq:aeb_main}
	|\alpha^n-(e^\gamma)^m|\leq x.
\end{equation}

We will show that $n \leq \frac{\log x}{\log \alpha} + O(\log \log x)$ and $m\leq \frac{\log x}{\gamma}+ O(\log \log x)$. Then counting the possible solutions $(n,m)$ yields the required upper bound for $T(x)$.

We distinguish between two cases.

\noindent\textbf{Case 1:}
$\alpha^n\leq 2x$.  This immediately yields 
\[
	n 
	\leq \frac{\log x}{\log \alpha} + \frac{\log 2}{\log \alpha}
	= \frac{\log x}{\log \alpha} + O(\log \log x).
\]	
Moreover, inequality \eqref{eq:aeb_main} implies
\[
	e^{\gamma m} 
	\leq x + \alpha^n 
	\leq 3x,
\]
which yields 
\[
	m  
	\leq \frac{\log 3x}{\gamma}
	= \frac{\log x}{\gamma} + \frac{\log 3}{\gamma}
	= \frac{\log x}{\gamma} + O(\log \log x),
\]
as required.

\noindent\textbf{Case 2:} 
$\alpha^n > 2x$. 
Inequality \eqref{eq:aeb_main} implies
\[
	e^{\gamma m} 
	\leq x+ \alpha^n 
	< 3 \alpha ^n,
\]
which implies
\begin{equation}\label{eq:aeb:mlessn}
	m < n \frac{\log \alpha}{\gamma}+ \frac{\log 3}{\gamma}.
\end{equation}
In particular, we estimate $m < C_{1} n$ with some positive constant $C_1$.

We divide \eqref{eq:aeb_main} by $\alpha^n$, obtaining
\begin{equation}\label{eq:aeb_I}
	\left| \frac{e^{\gamma m}}{\alpha^n}-1 \right|
	\leq \frac{x}{\alpha^n}.
\end{equation}
By the case assumption the expressions above are $<0.5$.
Since $|\log (y+1)|\leq 2|y|$ for $|y|<0.5$ we have
\begin{equation}\label{eq:aeb_II}
	|\Lambda| \coloneqq
	| \gamma m - n \log \alpha|
	\leq \frac{2x}{\alpha^n}.
\end{equation}

Next we apply Waldschmidt's theorem with the parameters $t=1$, $\beta_0= \gamma m$, $\beta_1=-n$ and $\alpha_1=\alpha$. Let $D$ be the degree of $\Q(\alpha, \gamma)$ over $\Q$ and set
\begin{align*}
	E 
	& = \max\{e^{1/D},D\},\\
	\log A_1 
	&= \max \left\{h(\alpha), \frac{e}{D}\log \alpha, \frac{1}{D}\right\},\\
	B &= C_{2} n \geq e^{h(\gamma)} C_{1}n.
\end{align*}
Note that $h(\beta_0)=h(\gamma m) \leq h(\gamma) + h(m)=h(\gamma) +  \log m\leq h(\gamma) + \log (C_{1}n)$ and $h(\beta_1)=h(n)=\log n$, 
so if $C_2$ is chosen large enough, we have indeed
\[
	B \geq \max \{E, D \log A_1, e^{h(\beta_0)}, e^{h(\beta_1)}\}.
\]
Now Waldschmidt's theorem tells us that
\[
	\log |\Lambda|
	\geq -C_3 \log B 
	= - C_3 (\log C_2 + \log n)
	\geq - C_4 \log n,
\]
where $C_3$ is the positive constant coming from Waldschmidt's theorem, only depending on $\alpha$ and $\gamma$.
 Combining this with \eqref{eq:aeb_II} we obtain
\[
	- C_4 \log n
	\leq \log |\Lambda|
	\leq \log 2 + \log x - n \log \alpha,
\]
which implies
\[
	n \leq \frac{\log x}{\log \alpha} + \frac{C_4}{\log \alpha} \log n + \frac{\log 2}{\log \alpha}.
\]
By Lemma \ref{lem:ineq} with $z=\log x$, $k=1/\log \alpha$, $c=C_4/\log \alpha$ and $d= \log 2 / \log \alpha$ this implies
\[
	n 
	\leq \frac{\log x}{\log \alpha} + O( \log \log x).
\]
Inserting into inequality \eqref{eq:aeb:mlessn} we also get
\[
	m \leq \frac{\log x}{\gamma} + O(\log \log x),
\]
which concludes the proof.
\end{proof}

The second special case we want to treat is where $\alpha$ and $\beta$ are both transcendental and of the form $e^{\gamma}$ and $e^{\delta}$, where $\gamma$ and $\delta$ are algebraic numbers.

\begin{thm}\label{thm:twoExponential}
Let $\gamma$ and $\delta$ be two algebraic numbers with $\Re(\gamma)>0$ and $\Re(\delta)>0$, which are linearly independent over $\Q$.
Let
\[
	T(x)
	= \# \{(n,m)\in \N^2 \fdg |(e^\gamma)^n - (e^\delta)^m| \leq x \}.
\]
Then we have
\[
	T(x)=\frac{(\log x)^2}{\Re(\gamma) \cdot \Re(\delta)} + O(\log x \cdot \log \log x)
\]
for $x$ large enough
and the implied constants are effective.
\end{thm}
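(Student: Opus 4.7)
The plan is to follow the template of Theorem \ref{thm:aeb}: the lower bound $T(x) \geq (\log x)^2/(\Re(\gamma)\cdot \Re(\delta)) + O(\log x)$ comes for free from Lemma \ref{lem:lowerBound} applied with $\alpha = e^\gamma$, $\beta = e^\delta$, since $\log|e^\gamma| = \Re(\gamma)$ and $\log|e^\delta| = \Re(\delta)$. All the work therefore sits in the upper bound. As before, I would split off the easy range $e^{n\Re(\gamma)} \leq 2x$, which immediately bounds both $n$ and $m$ as required, and concentrate on the main range $e^{n\Re(\gamma)} > 2x$, where the goal is to produce a linear form in logarithms of algebraic numbers that is exponentially small in $n$ and then match it against a Waldschmidt lower bound.

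One small subtlety I would flag up front: unlike in Theorem \ref{thm:aeb}, one cannot simply invoke Lemma \ref{lem:absVal} to reduce to the case of real positive bases. Passing to absolute values replaces $\gamma, \delta$ by $\Re(\gamma), \Re(\delta)$, and $\Q$-linear independence of $\gamma, \delta$ does not imply $\Q$-linear independence of their real parts (for instance $\gamma = 1+i$, $\delta = 1+2i$). When $\Re(\gamma), \Re(\delta)$ happen to be $\Q$-linearly dependent, $|e^\gamma|$ and $|e^\delta|$ are multiplicatively dependent and $T_{|e^\gamma|,|e^\delta|}(x)$ can easily exceed the claimed bound, so one must work directly with the complex exponentials.

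In the main range, dividing $|e^{n\gamma} - e^{m\delta}| \leq x$ by $e^{n\gamma}$ and taking a principal logarithm produces
\[
  \Lambda := m\delta - n\gamma - 2\pi i k, \qquad |\Lambda| \leq 2x\,e^{-n\Re(\gamma)},
\]
for the unique integer $k$ (necessarily $|k| = O(n)$) that makes $\Lambda$ the principal logarithm; the reverse triangle inequality simultaneously gives $m \leq C_1 n$. I would then verify $\Lambda \neq 0$ directly: if $k \neq 0$ then $\Lambda = 0$ would make $\pi i$ algebraic (contradiction), and if $k = 0$ it would contradict the $\Q$-linear independence of $\gamma, \delta$ together with $m,n \geq 1$.

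The main technical step is applying Lemma \ref{lem:waldschmidt} with $t = 1$, $\beta_0 = m\delta - n\gamma$, $\beta_1 = -2k$, $\alpha_1 = -1$ and branch $\log\alpha_1 = \pi i$; the heights $h(\beta_0), h(\beta_1)$ are $O(\log n)$ since $m, |k| = O(n)$, while $D = [\Q(\gamma,\delta):\Q]$ and $\log A_1$ are constants, so one may take $B = n^{O(1)}$ and Waldschmidt delivers $\log|\Lambda| \geq -C_2 \log n$. Matching this with the upper bound on $|\Lambda|$ gives $n \leq \log x/\Re(\gamma) + (C_2/\Re(\gamma))\log n + O(1)$, from which Lemma \ref{lem:ineq} extracts $n \leq \log x/\Re(\gamma) + O(\log\log x)$; the bound $m \leq \log x/\Re(\delta) + O(\log\log x)$ follows by feeding this back into $m \leq n\Re(\gamma)/\Re(\delta) + O(1)$. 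Counting admissible pairs then yields the desired upper bound. The step I expect to cost the most care is the $2\pi i k$ term: recasting it as $2\log(-1)$ to fit the Waldschmidt framework, bounding $|k|$ in terms of $n$, and verifying that the resulting form is genuinely nonzero; once this is dealt with, the remainder is a routine adaptation of the one-logarithm argument from Theorem \ref{thm:aeb}.
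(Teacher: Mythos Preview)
Your proposal is correct and follows essentially the same route as the paper: split off the trivial range, pass to $\Lambda = m\delta - n\gamma + 2k\pi i$ via the principal logarithm, rewrite $\pi i = \log(-1)$, and apply Lemma~\ref{lem:waldschmidt} with $t=1$, $\alpha_1=-1$, $\beta_0=m\delta-n\gamma$, $\beta_1=\pm 2k$ to obtain $\log|\Lambda|\geq -C\log n$, then finish with Lemma~\ref{lem:ineq}. Your explicit remark that Lemma~\ref{lem:absVal} is unavailable here (and your nonvanishing check for $\Lambda$) are points the paper leaves implicit, and your derivation of the $m$-bound by feeding the $n$-bound into $m\leq n\,\Re(\gamma)/\Re(\delta)+O(1)$ is a minor shortcut over the paper's ``completely analogous'' repetition, but otherwise the arguments coincide.
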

\begin{proof}
Since $\log |e^\gamma|=\Re(\gamma)$ and $\log |e^\delta|=\Re(\delta)$, by Lemma \ref{lem:lowerBound} we have
\[
	T(x) \geq \frac{(\log x)^2}{\Re(\gamma) \cdot \Re(\delta)} + O(\log x).
\]
Therefore, we only need to show that
\[
	T(x)
	\leq \frac{(\log x)^2}{\Re(\gamma) \cdot \Re(\delta)} + O(\log x \cdot \log \log x).
\]
Suppose that 
\begin{equation}\label{eq:ineq_ean-ebm}
	|(e^\gamma)^n - (e^\delta)^m| \leq x.
\end{equation}
We will show that $n\leq \frac{\log x }{\Re(\gamma)} + O(\log \log x)$ and $m\leq \frac{\log x}{\Re(\delta)} + O(\log \log x)$. Then counting the possible solutions $(n,m)$ yields the required upper bound for $T(x)$.

We only prove the bound for $n$. The proof of the bound for $m$ is completely analogous. 
We distinguish between two cases.

\noindent\textbf{Case 1:} $|e^{\gamma n}|\leq 2x$. This immediately yields
\[
	n \leq \frac{\log x }{\Re(\gamma)} + \frac{\log 2}{\Re(\gamma)}
	= \frac{\log x}{\Re(\gamma)} + O(\log \log x).
\]	

\noindent\textbf{Case 2:}
$|e^{\gamma n}| > 2x$. 
Inequality \eqref{eq:ineq_ean-ebm} implies
\[
	|e^\delta|^m
	\leq x +  |e^\gamma|^n
	< 3 |e^\gamma|^n,
\]
so $m \leq C_5 n$ for some positive constant $C_5$ only depending on $\gamma$ and $\delta$.

We divide \eqref{eq:ineq_ean-ebm} by $|e^{\gamma}|^n$, obtaining
\begin{equation}\label{eq:ineq_eqn-ebm_divided}
	\left| \frac{e^{\delta m}}{e^{\gamma n}} - 1 \right| 
	\leq \frac{x}{|e^{\gamma}|^n}.
\end{equation}
Let $\log(z)$ be the principal branch of the complex logarithm, with $\Im (\log (z)) \in [-\pi,\pi)$. Then, since $\log(1+z)=z-\frac{z^2}{2}+\frac{z^3}{3}-+\dots$ for $|z|<1$, we have $|\log (1+z)|\leq 2 |z|$ for $|z|<0.5$.
By the case assumption the expressions in \eqref{eq:ineq_eqn-ebm_divided} are $< 0.5$, so we get
\[
	\left|\log\left(e^{\delta m - \gamma n}\right)\right|	
	=| \delta m - \gamma n + 2k\pi i|
	\leq \frac{2x}{|e^{\gamma}|^n},
\]
where $k$ is an integer such that $| \Im(\delta) m-\Im(\gamma) n + 2k\pi| \leq \pi$. Note that this implies $|k| \leq C_6 \max\{n,m\} \leq C_7 n$. Set $\eta = \delta m - \gamma n$ and note that $\pi i = \log(-1)$. Then we can write
\begin{equation}\label{eq:egamma-edelta_lambda}
	|\Lambda|\coloneqq
	|\eta + 2k\log(-1)| 
	\leq \frac{2x}{|e^{\gamma}|^n}.
\end{equation}
Now we apply Waldschmidt's theorem with $t=1$, $\alpha_1=-1$, $\beta_0= \eta$ and $\beta_1=2k$. Let $D$ be the degree of $\Q(\gamma,\delta)$ over $\Q$ and set
\begin{align*}
	E 
	& = \max\{e^{1/D},D\},\\
	\log A_1 
	&= \frac{e \pi}{D}
	= \max \left\{h(\alpha_1), \frac{e}{D} |\log \alpha_1|, \frac{1}{D}\right\},\\
	B &= n^{C_{10}},
\end{align*}
where $C_{10}$ is a sufficiently large constant.
Note that $h(\beta_0)=h(\delta m - \gamma n) \leq h(\delta) + \log m +  h(\gamma) + \log n + \log 2 \leq C_8 \log n$ and $h(\beta_1)=h(2k)\leq \log 2 + \log |k| \leq \log 2 +\log(C_7n)\leq C_9 \log n$, so if $C_{10}$ is chosen large enough we have indeed
\[
	B \geq \max \{E, D \log A_1, e^{h(\beta_0)}, e^{h(\beta_1)}\}.
\]
Now Waldschmidt's theorem tells us that
\[
	\log |\Lambda|
	\geq -C_{11} \log B 
	= - C_{11} (C_{10}\log n)
	= - C_{12} \log n,
\]
where $C_{11}$ is the positive constant coming from Waldschmidt's theorem, only depending on $\gamma$ and $\delta$.
Combining this with \eqref{eq:egamma-edelta_lambda} we obtain
\[
	- C_{12} \log n
	\leq \log |\Lambda|
	\leq \log 2 + \log x - n\, \Re(\gamma),
\]
which implies
\[
	n \leq \frac{\log x}{\Re(\gamma)} + \frac{C_{12}}{\Re(\gamma)} \log n + \frac{\log 2}{\Re(\gamma)}.
\]
By Lemma \ref{lem:ineq} with $z=\log x$, $k=1/\Re(\gamma)$, $c=C_{12}/\Re(\gamma)$ and $d= \log 2 / \Re(\gamma)$ this implies
\[
	n 
	\leq \frac{\log x}{\Re(\gamma)} + O( \log \log x),
\]
which completes the proof.
\end{proof}

\begin{rem}
In the statement of Theorem \ref{thm:twoExponential}, if we make the stronger assumption that $\Re(\gamma)$ and $\Re(\delta)$ should be linearly independent over $\Q$ instead of supposing linear independence of $\gamma$ and $\delta$, then $\mu(\log |e^\gamma|/\log |e^\delta|) = \mu(\Re(\gamma)/\Re(\delta)) = 2$ (by Roth's theorem).  Thus in this case, the result  in Theorem~\ref{thm:twoExponential} immediately follows from  Theorem~\ref{thm:sufficient}. However, the constants in Theorem~\ref{thm:twoExponential} are effective, while in Theorem~\ref{thm:sufficient} they are not.
\end{rem}

\section{A problem with more than two powers}\label{sec:aProblem}
Similarly to the problem with integer recurrence sequences in \cite{Yang2020, TichyVukusicYangZiegler2021}, one can consider Diophantine inequalities with sequences of transcendental numbers satisfying dominant root conditions. For instance, consider the following problem, which is Problem A in  \cite{Yang2020} (resp.\ Problem 2 in \cite{TichyVukusicYangZiegler2021}).

\begin{problem}\label{probl:pi}
Is the following true?
\[
	\#\{(n, m)\in \N^2\colon ~  | \pi^n + (\sqrt 5)^n - 7^m - e^m| \leq x \}  \sim \frac{(\log x)^2}{\log \pi \cdot \log 7 },
	\quad \text{as } x \to \infty.
\]
\end{problem}

We cannot solve Problem \ref{probl:pi} because the dominant terms are $\pi^n$ and $7^m$. We can, however,  solve the following modified problem ($e$ and $\pi$ change positions), in which case the dominant terms are $e^n$ and $7^m$.

\setcounter{countD}{\getrefnumber{probl:pi}}\addtocounter{countD}{-1}

\begin{problemStar}
Is the following true?
\[
	\#\{(n, m)\in \N^2\colon ~  | e^n + (\sqrt 5)^n - 7^m - \pi^m| \leq x \}  \sim \frac{(\log x)^2}{ \log 7 },
	\quad \text{as } x \to \infty.
\]
\end{problemStar}

We now combine the methods from \cite{TichyVukusicYangZiegler2021} and Section \ref{sec:specialCases} and in order to give a positive answer to the above question. 

\begin{thm}
Let
\[
T(x) = 	\#\{(n, m)\in \N^2\colon ~  | e^n + (\sqrt 5)^n - 7^m - \pi^m| \leq x \}.
\]
Then
\[
	\frac{(\log x)^2}{\log 7}  + O(\log x) 
	\leq T(x) 
	\leq \frac{(\log x)^2}{\log 7} +  O(\log x \cdot \log \log x).
\]
\end{thm}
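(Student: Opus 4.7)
The plan is to combine the lower bound strategy of Lemma~\ref{lem:lowerBound} with the two-case upper-bound argument of Theorems~\ref{thm:aeb} and~\ref{thm:twoExponential}, applied to the linear form $\Lambda := n - m\log 7$ (a linear form in the single logarithm $\log 7$, since $\log e = 1$), while carefully tracking the subdominant exponentials $(\sqrt 5)^n$ and $\pi^m$.

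For the lower bound, I would observe that whenever $e^n \leq x/4$ and $7^m \leq x/4$ the triangle inequality gives
\[
	|e^n + (\sqrt 5)^n - 7^m - \pi^m| \leq 2e^n + 2\cdot 7^m \leq x,
\]
so counting such $(n,m)$ and using $\log e = 1$ immediately yields $T(x) \geq (\log x)^2/\log 7 + O(\log x)$.

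For the upper bound I would fix a solution $(n, m)$ and split on the size of $e^n$. In Case~1, where $e^n \leq Cx$ for a fixed $C > 1$, one gets $n \leq \log x + O(1)$ at once; isolating $7^m + \pi^m$ in the hypothesis gives $7^m \leq 2e^n + (\sqrt 5)^n + x = O(x)$, hence $m \log 7 \leq \log x + O(1)$, so Case~1 alone contributes at most $(\log x)^2/\log 7 + O(\log x)$ pairs. In Case~2, where $e^n > Cx$ with $C$ large enough (so the forthcoming logarithm is well-defined), set $P := e^n + (\sqrt 5)^n$ and $Q := 7^m + \pi^m$. From $|P - Q| \leq x$ one checks that $Q/e^n \in [1 - 1/C,\, 2 + 1/C]$, so $|P/Q - 1| \leq x/Q = O(x/e^n) < 1/2$. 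Expanding
\[
	\log(P/Q) = \Lambda + \log\bigl(1 + (\sqrt 5/e)^n\bigr) - \log\bigl(1 + (\pi/7)^m\bigr)
\]
and using $|\log(1+y)| \leq 2|y|$ for $|y|<1/2$ yields
\[
	|\Lambda| \leq C_1 \frac{x}{e^n} + 2\left(\frac{\sqrt 5}{e}\right)^n + 2\left(\frac{\pi}{7}\right)^m.
\]

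I would then apply Lemma~\ref{lem:waldschmidt} with $t=1$, $\alpha_1 = 7$, $\beta_0 = n$, $\beta_1 = -m$, and the parameter $B$ (in the notation of Lemma~\ref{lem:waldschmidt}) chosen as a suitable polynomial in $n$ (note that in Case~2 one also has $m \leq C_3 n$), obtaining $|\Lambda| \geq c_1/n^K$ for effective $c_1, K > 0$. Since at least one of the three RHS terms must be $\geq c_1/(3n^K)$, three sub-cases arise. If $C_1 x/e^n$ dominates, Lemma~\ref{lem:ineq} with $z = \log x$, $k = 1$, $c = K$, and appropriate $d$ yields $n \leq \log x + O(\log \log x)$; combined with the Case~2 constraint $n > \log x + \log C$, this leaves only $O(\log \log x)$ admissible values of $n$ and, using $m \log 7 \leq n + O(1)$, at most $O(\log x \cdot \log \log x)$ pairs. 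If $2(\sqrt 5/e)^n$ dominates, then $n\log(e/\sqrt 5) \leq K \log n + O(1)$ bounds $n$ absolutely, producing $O(\log x)$ pairs in total; the case in which $2(\pi/7)^m$ dominates gives an absolute bound on $m$ in the same way. Summing these contributions with Case~1 yields the required upper bound $(\log x)^2/\log 7 + O(\log x \cdot \log \log x)$.

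The main obstacle is the three-way analysis of which term dominates $|\Lambda|$ in Case~2: unlike the situation in Theorem~\ref{thm:twoExponential}, the subdominant exponentials $(\sqrt 5/e)^n$ and $(\pi/7)^m$ decay much faster than Waldschmidt's polynomial lower bound, so in those scenarios one must convert the comparison into an \emph{absolute} bound on $n$ (respectively $m$) and then verify that the corresponding ``exceptional'' pairs contribute only $O(\log x)$, which is negligible against the leading term.
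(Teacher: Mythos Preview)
Your proposal is correct and rests on the same key ingredient as the paper (Waldschmidt's lower bound for $\Lambda = n - m\log 7$), but the case decomposition is organised differently. The paper first reduces by symmetry to $e^n \leq 7^m$ (so $n \leq 2m$), then splits on whether $x \geq \max\{(\sqrt 5)^n,\pi^m\}$. In the affirmative case the two subdominant terms are absorbed into $3x/7^m$ and one is exactly in the situation of Theorem~\ref{thm:aeb}. In the negative case the paper bounds both ratios $(\sqrt 5)^n/7^m$ and $(\pi/7)^m$ simultaneously by $0.83^n$ (using $e^n \leq 7^m$ and $n \leq 2m$), so Waldschmidt yields $n \leq C_0\log m$; it then returns to the \emph{original} inequality to extract $x \geq 7^m - 2e^{C_0\log m}$ and hence $m \leq \log x/\log 7 + O(\log\log x)$. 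This sidesteps your three-way split entirely: the paper never compares $x/e^n$, $(\sqrt 5/e)^n$ and $(\pi/7)^m$ against one another. Your route is a little longer but equally valid; one small point you glossed over is that in the sub-case $2(\pi/7)^m \geq c_1/(3n^K)$ the inequality is in mixed variables, so you must invoke the Case~2 relation $n = m\log 7 + O(1)$ (which you did establish) to convert it into an absolute bound on $m$.
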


\begin{proof}
A lower bound for $T(x)$ can be given by a similar argument as in the proof of Lemma~\ref{lem:lowerBound}.
We now prove the upper bound. 
Assume that
\begin{equation}\label{eq:morePowers}
| e^n + (\sqrt 5)^n - 7^m - \pi^m| \leq x.
\end{equation}
As usual, we will show that $ n\leq \log x + O(\log \log x)$ and $m\leq \frac{\log x}{\log 7} + O(\log \log x)$, which immediately yields the required upper bound for $T(x)$.

Let us assume that 
\[
	e^n\leq 7^m
\]
(the case $e^n > 7^m$ can be treated completely analogously). Then we have
\begin{equation}\label{eq:assumpt_en7m}
	n \leq m \log 7 \leq 2m.
\end{equation} 
By the reverse triangle inequality, \eqref{eq:morePowers} implies, after division by $7^m$,
\begin{equation}\label{eq:moreP_1}
	\left|\frac{e^n}{7^m} - 1 \right|
	\leq \frac{(\sqrt{5})^n}{7^m} + \frac{\pi^m}{7^m} + \frac{x}{7^m}.
\end{equation}
We distinguish between two cases.

\noindent\textbf{Case 1:} $x\geq \max\{(\sqrt{5})^n, \pi^m\}$. Then  
\[
	\left| \frac{e^n}{7^m}-1 \right| 
	\leq  3 \cdot \frac{x}{7^m}.
\] This is basically the same situation as in \eqref{eq:aeb_I} and we get the desired bounds.

\noindent\textbf{Case 2:} $x< \max\{(\sqrt{5})^n, \pi^m\}$.
Then we can estimate \eqref{eq:moreP_1} further by 
\begin{align}\label{eq:moreP_gamma}
	\left|\frac{e^n}{7^m} - 1 \right|
	&\leq 2 \cdot \left( \frac{(\sqrt{5})^n}{7^m} + \frac{\pi^m}{7^m} \right)
	\leq 2 \cdot \left( \frac{(\sqrt{5})^n}{e^n} + \left( \frac{\pi}{7}\right)^m \right)\\
	&\leq 2 \cdot \left( \left(\frac{\sqrt{5}}{e}\right)^n + \left(\frac{\pi}{7}\right)^{n/2} \right)
	\leq 4 \cdot 0.83^n.\nonumber
\end{align}
Since $n\leq 2m$, Waldschmidt's theorem implies 
\[
	\log |n - m \log 7 | \geq -2^{26} \cdot e \cdot \log 7 \cdot \log (2 m)
\]
and combined with \eqref{eq:moreP_gamma} this yields 
\[
	n \leq C_0 \log m.
\]
Now we go back to \eqref{eq:morePowers}. Using the reverse triangle inequality and estimating we get
\[
	x 
	\geq 7^m - 2 \cdot e^n
	\geq 7^m - 2 \cdot e^{C_0 \log m}. 
\]
After taking logarithms and applying Lemma \ref{lem:ineq} we obtain
\[
	m \leq \frac{\log x}{\log 7} + O(\log \log x).
\] 
From \eqref{eq:assumpt_en7m} we also get
\[
	n \leq \log x + O(\log \log x).
\]
\end{proof}

\end{document}